\title[On the symplectic eightfold associated to a Pfaffian cubic fourfold]{On the symplectic eightfold \\ associated to a Pfaffian cubic fourfold}
\author{N.~Addington}
\email{adding@math.duke.edu}
\address{\noindent Department of Mathematics \\
Duke University, Box 90320 \\
Durham, NC 27708-0320 \\
United States}
\author{M.~Lehn}
\email{lehn@mathematik.uni-mainz.de}
\address{Institut f\"ur Mathematik \\
Johannes Gutenberg--Universit\"at Mainz \\
55099 Mainz \\
Germany}
\newtheoremstyle{ourthm}
{3pt}
{3pt}
{\slshape}
{}
{\bf\upshape}
{\ ---}
{.5em}
{}
\theoremstyle{ourthm}
\newtheorem*{theorem*}{Theorem}
\newtheorem*{corollary*}{Corollary}
\newtheorem{proposition}{Proposition}
\newtheorem{lemma}[proposition]{Lemma}
\newcommand \ka {\mathcal A}
\newcommand \kc {\mathcal C}
\newcommand \kh {\mathcal H}
\newcommand \ko {\mathcal O}
\newcommand \kp {\mathcal P}
\newcommand \kq {\mathcal Q}
\newcommand \IP {\mathbb P}
\let \phi \undefined
\newcommand \dual {\makebox[0mm]{}^{{\scriptstyle\vee}}}
\newcommand \ddual {\dual\dual}
\DeclareMathOperator \cone {cone}
\DeclareMathOperator \codim {codim}
\DeclareMathOperator \Hom {Hom}
\DeclareMathOperator \Ext {Ext}
\DeclareMathOperator \pf {pf}
\DeclareMathOperator \rk {rk}
\DeclareMathOperator \Hilb {Hilb}
\DeclareMathOperator \Grass {Grass}
\DeclareMathOperator \Coh {Coh}
\DeclareMathOperator \pr {pr}
\DeclareMathOperator \rad {rad}
\DeclareMathOperator \RHom {RHom}
\newcommand \lra \longrightarrow
\newcommand \xra \xrightarrow
\newcommand \xla \xleftarrow
\begin{document}

\begin{abstract}
We show that the irreducible holomorphic symplectic eightfold $Z$ associated to a cubic fourfold $Y$ not containing a plane is deformation-equivalent to the Hilbert scheme of four points on a K3 surface.  We do this by constructing for a generic Pfaffian cubic $Y$ a birational map $Z \dashrightarrow \Hilb^4(X)$, where $X$ is the K3 surface associated to $Y$ by Beauville and Donagi.  We interpret $Z$ as a moduli space of complexes on $X$ and observe that at some point of $Z$, hence on a Zariski open subset, the complex is just the ideal sheaf of four points.  This note is an appendix to \cite{llsvs}.
\end{abstract}

\maketitle

\section*{Introduction}

Beauville and Donagi \cite{bd} showed that if $Y \subset \IP^5$ is a smooth cubic hypersurface then the variety $F$ of lines on $Y$ is an irreducible holomorphic symplectic fourfold.  They did this by showing that for certain special cubics, called Pfaffian cubics, there is an associated K3 surface $X$ such that $F \cong \Hilb^2(X)$.  Kuznetsov later observed that for a general $Y$, the K3 surface $X$ can be replaced with a ``K3 category'' $\ka$, and he and Markushevich showed that $F$ is a moduli space of objects in $\ka$, which in some sense explains the symplectic form on $F$.  In more detail, the derived category $D(Y) = D^b(\Coh(Y))$ admits a semi-orthogonal decomposition
\[ D(Y)=\langle \ka,\ko_Y(-1),\ko_Y,\ko_Y(1)\rangle, \]
where $\ka$ is like the derived category of a K3 surface in that it has the same Serre functor and Hochschild homology and cohomology, and $\ka \cong D(X)$ if $Y$ is Pfaffian \cite{kuz_gr}.  Given a line $\ell \subset Y$, the projection of the ideal sheaf $I_\ell$ into $\ka$ is a stable sheaf whose deformation space is naturally identified with that of $\ell$ \cite[\S5]{km} .

Lehn et al.\ \cite{llsvs} associated to each cubic $Y$ not containing a plane an irreducible holomorphic symplectic eightfold $Z$, constructed not from lines but from twisted cubics on $Y$.  They calculated that $Z$ has the same topological Euler number as $\Hilb^4(\text{K3})$, but left open the question of whether the two are deformation equivalent.  In this note, using a derived interpretation like that of Kuznetsov and Markushevich, we show that they are.
\begin{theorem*}
If $Y$ is a Pfaffian cubic fourfold not containing a plane and the associated K3 surface $X$ does not contain a line then $Z$ is birational to $\Hilb^4(X)$.
\end{theorem*}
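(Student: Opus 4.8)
\emph{Sketch of the intended proof.} The plan is to realise $Z$, birationally, as a moduli space of objects in $D(X)\cong\ka$ obtained by projecting the structure sheaves of the twisted cubics on $Y$ into $\ka$, and then to pin down the generic such object by an explicit computation at a single point. Recall from \cite{llsvs} that the generalised twisted cubics on $Y$ form a smooth projective tenfold $M_3(Y)$, that there is a $\IP^2$-fibration $a\colon M_3(Y)\to Z'$ onto a smooth projective eightfold whose fibre through $[C]$ is the linear system of $C$ on the cubic surface $S_C=Y\cap\IP^3_C$ it spans, and that $Z'\to Z$ is a birational morphism contracting a divisor onto a copy of $Y\subset Z$. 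Let $\pr\colon D(Y)\to\ka$ be the projection onto $\ka$ in the semi-orthogonal decomposition above, and $\Phi\colon\ka\xrightarrow{\ \sim\ }D(X)$ Kuznetsov's equivalence for the Pfaffian cubic $Y$, a Fourier--Mukai functor with some kernel $\mathcal E\in D(Y\times X)$ \cite{kuz_gr}. For a suitable twist, $\Phi(\pr(\ko_C(t)))$ has Mukai vector $(1,0,-3)$ --- the Mukai vector of the ideal sheaf of four points on $X$ --- as one checks using $[\ko_C]=[\ko_{S_C}]-[\ko_{S_C}(-C)]$ in $K$-theory and the Chern character of $\mathcal E$.

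Applying $\Phi\circ\pr$ fibrewise to a suitable twist of the universal twisted cubic $\kc\subset Y\times M_3(Y)$ yields a family $\mathcal G\in D(X\times M_3(Y))$ with $\mathcal G|_{\{[C]\}}=\Phi(\pr(\ko_C(t)))$; the next task is to descend it, over a dense open subset, along $a$ and along $Z'\to Z$. I expect constancy of $\mathcal G$ on the $\IP^2$-fibres of $a$ to follow from the description of $a$ in \cite{llsvs} as essentially the projectivisation of the rank-three bundle with fibres $H^0(S_C,\ko_{S_C}(C))$, together with the Koszul resolution of $\ko_{S_C}$ over $\ko_Y$ and the exact sequences relating the members of $|C|$. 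Descent across $Z'\to Z$, i.e.\ across the divisor contracted onto $Y$, is where the hypothesis that $X$ contains no line is used: it rules out the degenerations of $C$ for which $\Phi(\pr(\ko_C(t)))$ would acquire a summand supported on a line of $X$, just as a line on $X$ is the obstruction in \cite[\S5]{km}. The result is a family over a dense open $U\subseteq Z$ of objects of $D(X)$ with Mukai vector $(1,0,-3)$.

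The heart of the argument, and the step I expect to be the main obstacle, is a single explicit computation. For one carefully chosen $\IP^3\subset\IP^5$ with $Y\cap\IP^3$ a smooth cubic surface and one twisted cubic $C_0$ on it in general position, I would unwind the Fourier--Mukai transform with kernel $\mathcal E$ on the Eagon--Northcott resolution of $\ko_{C_0}(t)$, reducing to a Beilinson-type computation on $G(2,W)\supset X$, and check that the output is quasi-isomorphic to the ideal sheaf $I_{W_0}$ of a reduced length-four subscheme $W_0\subset X$ whose four points are in general position. The difficulty is to choose $C_0$ and $\IP^3_{C_0}$ so that every map in the computation is transverse, and then to verify both that the higher cohomology sheaves of the transform vanish and that $W_0$ is reduced and imposes independent conditions.

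With $\Phi(\pr(\ko_{C_0}(t)))\cong I_{W_0}$ established at one point, I conclude as follows. In the family $\mathcal G$ the locus where the fibre is quasi-isomorphic to a torsion-free sheaf is Zariski open; a torsion-free sheaf with Mukai vector $(1,0,-3)$ has rank one and $c_1=0$, hence --- a numerically trivial divisor on a K3 surface being trivial --- is of the form $I_W$ with $W\in\Hilb^4(X)$. So, after shrinking $U$, $\mathcal G|_{X\times U}$ is a flat family of ideal sheaves of length-four subschemes and defines a morphism $f\colon U\to\Hilb^4(X)$. Its differential at $[C_0]$ is the isomorphism $T_{[C_0]}Z\xrightarrow{\ \sim\ }\Ext^1_X(I_{W_0},I_{W_0})=T_{[W_0]}\Hilb^4(X)$ obtained by comparing the deformation theory of twisted cubics on $Y$, modulo the $\IP^2$-fibre direction of $a$, with that of the projected object, as for lines and their projections in \cite[\S5]{km}. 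Since $Z$ is irreducible and $\Hilb^4(X)$ is separated of the same dimension, a rational map that is an isomorphism on one tangent space is birational; equivalently, running $\Phi^{-1}$ in reverse, $W\mapsto\Phi^{-1}(I_W)$ and then to a twisted cubic on $Y$, provides a rational inverse.
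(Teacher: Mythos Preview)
Your overall architecture --- project twisted cubics into $\ka$, pass to $D(X)$, identify the generic object as an ideal sheaf of four points, then use openness --- matches the paper's, but the step you flag as ``the main obstacle'' is precisely the one the paper sidesteps, and your account of where the hypothesis ``$X$ contains no line'' enters is wrong. The paper never unwinds the transform on a generic aCM twisted cubic. Instead it computes at the \emph{non}-CM locus: if $u([C])=j(y)$ then $C$ is a plane cubic with an embedded point at $y$, and two short exact sequences give $\pr(I_{C}(2))\cong\pr(\ko_y)[-1]$. For $\Phi$ the paper takes the concrete kernel $I_\Gamma$, where $\Gamma=\{([P],[\varphi]):P\cap\rad(\varphi)\ne 0\}\subset X\times Y$; a line of Schubert calculus shows $p_Y\colon\Gamma\to Y$ is generically $4$-to-$1$, so for generic $y$ the object $\Phi(\ko_y)$ is \emph{visibly} the ideal sheaf of the four-point fibre $p_Y^{-1}(y)$. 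Semicontinuity then spreads ``ideal sheaf of four points'' over an open subset of $M$ meeting the aCM locus. This is the ``semi-continuity trick'' advertised in the introduction, and it replaces your Eagon--Northcott/Beilinson computation at a carefully chosen $C_0$ entirely.

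Two of your supporting claims also need correction. Descent along $Z'\to Z$ has nothing to do with lines on $X$: constancy on the fibres over $j(Y)$ is immediate from $\pr(I_C(2))\cong\pr(\ko_y)[-1]$, which depends only on the embedded point $y$. In fact the paper proves the sharper statement that $u([C_1])=u([C_2])$ if and only if $\pr(I_{C_1}(2))\cong\pr(I_{C_2}(2))$, which handles the $\IP^2$-fibres of $a$ and the blow-down simultaneously and also yields injectivity of the resulting map to $\Hilb^4(X)$. The hypothesis that $X$ contain no line is used instead to prove that the explicit kernel $I_\Gamma$ really induces an equivalence $D(X)\simeq\ka$: one needs $\Gamma_P\ne\Gamma_Q$ for distinct $[P],[Q]\in X$, and the argument for this uses $P\cap Q=0$, which fails exactly when $X$ contains a line. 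If you instead invoke Kuznetsov's equivalence from \cite{kuz_gr} as a black box you may not need that hypothesis, but you lose the explicit kernel that makes $\Phi(\ko_y)$ transparent --- and you are back to the hard direct computation.
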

\begin{corollary*}
For any cubic fourfold $Y$ not containing a plane, $Z$ is deformation equivalent to the Hilbert scheme of four points on a K3 surface.
\end{corollary*}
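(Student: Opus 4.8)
The plan is to derive the corollary from the theorem by a standard deformation argument, using two inputs: first, that the eightfolds $Z$ attached to all admissible cubics fit into one smooth proper family over a connected base; and second, Huybrechts' theorem that two birational irreducible holomorphic symplectic manifolds are deformation equivalent. Neither input requires any genuinely new geometry once the theorem is in hand.

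I would begin by fixing the parameter space. Let $U \subset \IP(H^0(\IP^5,\ko_{\IP^5}(3))) = \IP^{55}$ be the locus of smooth cubic fourfolds not containing a plane. It is the intersection of two nonempty Zariski-open subsets, namely the complement of the discriminant and the complement of the image of the incidence variety of planes lying on cubics, hence itself nonempty Zariski-open in $\IP^{55}$; in particular $U$ is irreducible and connected. By \cite{llsvs} the assignment $Y \mapsto Z_Y$ is defined over all of $U$ and yields a smooth proper family $\mathcal{Z} \to U$ whose fibres are irreducible holomorphic symplectic eightfolds. By the very definition of deformation equivalence — a smooth proper family over a connected base — all the fibres $Z_Y$ with $Y \in U$ are deformation equivalent to one another.

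Next I would produce one Pfaffian cubic to which the theorem applies. The Pfaffian cubics form a nonempty irreducible subvariety of $\IP^{55}$, and a generic member of it is smooth, does not contain a plane, and has the property that its associated Beauville--Donagi K3 surface $X$ is a general polarized K3 surface of degree $14$; such an $X$ has Picard rank one and therefore contains no line. Choosing such a point $Y_0 \in U$, the theorem provides a birational map $Z_{Y_0} \dashrightarrow \Hilb^4(X)$, and since both sides are irreducible holomorphic symplectic, Huybrechts' theorem shows that $Z_{Y_0}$ is deformation equivalent to $\Hilb^4(X)$. Combining this with the previous paragraph, $Z_Y$ is deformation equivalent to $\Hilb^4(X)$ for every $Y \in U$, i.e.\ for every cubic fourfold not containing a plane.

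The one step I expect to need care is the assertion that $\mathcal{Z} \to U$ is a smooth proper family over the \emph{whole} of $U$ — that $Z_Y$ is a smooth projective irreducible holomorphic symplectic eightfold for every smooth cubic not containing a plane, not merely for a generic one. This is precisely the content of \cite{llsvs}, so there is nothing new to prove; one must only verify that their construction of $Z$, via the Hilbert scheme of generalized twisted cubics followed by two successive contractions, is compatible with base change, so as to globalize it to the family $\mathcal{Z}$ over $U$. The remaining genericity assertions — that ``$Y$ is Pfaffian'', ``$Y$ contains no plane'', and ``$X$ contains no line'' can be satisfied simultaneously — are elementary dimension counts and pose no obstacle.
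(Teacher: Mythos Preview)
Your proposal is correct and follows essentially the same approach as the paper: the paper's own proof is a single sentence invoking Huybrechts' theorem that birational irreducible holomorphic symplectic varieties are deformation equivalent, leaving implicit exactly the two points you spell out---that the $Z_Y$ form a smooth proper family over the connected locus of cubics without a plane (from \cite{llsvs}), and that a Pfaffian cubic satisfying the hypotheses of the Theorem exists. Your write-up simply makes these implicit steps explicit.
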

\noindent The corollary follows from Huybrechts' theorem that birational holomorphic symplectic varieties are deformation equivalent \cite[Thm.~4.6]{huybrechts}.

In Section \ref{sec1} we interpret $Z$ as a moduli space of objects in $\ka$, clarifying the construction of \cite{llsvs}.  There, $Z$ was constructed as a contraction of the moduli space $M$ of generalized twisted cubics on $Y$; precisely, there is an embedding $j\colon Y \to Z$ such that $M$ is a $\IP^2$-bundle over the blow-up of $Z$ along $j(Y)$.  Here we show that two points $[C_1], [C_2] \in M$ lie in the same fiber of $M \to Z$ if and only if the projections of the twisted ideal sheaves $I_{C_1}(2)$ and $I_{C_2}(2)$ into the subcategory $\ka$ are the same, and that if $[C]$ lies over $j(y)$ then the projection of $I_C(2)$ is the same as the projection of the skyscraper sheaf $\ko_y$, up to a shift.

In Section \ref{sec2} we recall Beauville and Donagi's construction of a K3 surface $X$ associated to a Pfaffian cubic $Y$, and give a self-contained proof of Kuznetsov's equivalence $\ka \cong D(X)$, emphasizing its geometric content: it is induced by the ideal sheaf of a correspondence $\Gamma \subset X \times Y$ that is generically 4-to-1 over $Y$.

In Section \ref{sec3} we argue that if $Y$ is Pfaffian then for a general $[C] \in M$, the projection of $I_C(2)$ into $\ka \cong D(X)$ is the ideal sheaf of four points in $X$, again up to a shift.  Rather than proving this directly, we observe that if $[C]$ lies over $j(y)$ then we can instead pass $\ko_y$ over to $D(X)$, and for generic $y$ this clearly yields the ideal sheaf of four points; but the property of being an ideal sheaf is an open condition.  Thus we get a map from a Zariski open subset $M_0 \subset M$ to $\Hilb^4(X)$, and from our work in Section \ref{sec1} we see that this descends to an embedding of an open subset $Z_1 \subset Z$ into $\Hilb^4(X)$.

After the preprint of this paper appeared, Ouchi \cite{ouchi} showed that for a very general $Y$ containing a plane -- that is, in the opposite situation to \cite{llsvs} -- there is a Bridgeland stability condition on $\ka$ under which the projection of the skyscraper sheaf $\ko_y$ into $\ka$ is stable; hence $Y$ embeds as a Lagrangian in a holomorphic symplectic eightfold deformation-equivalent to $\Hilb^4(\text{K3})$.  Presumably his eightfold is the limit of $Z$ as $Y$ approaches the locus of cubics containing a plane.

\subsection*{Acknowledgements}
We thank E.~Macr\`i and P.~Stellari for discussing their related work in progress, A.~Kuz\-net\-sov for discussing his, and J.~Starr for informing us of his related unpublished work.  We started this project at a workshop hosted by Y.~Nami\-kawa at the Research Institute for Mathematical Sciences in Kyoto, Japan, and finished it while the first author was visiting the Hausdorff Research Institute for Mathematics in Bonn, Germany; we thank both for their hospitality.  Our travel was supported by SFB Transregio 45 Bonn-Mainz-Essen and NSF grant no.\ DMS--0905923.

\section{\texorpdfstring{$Z$ as a moduli space of objects in $\ka$}{Z as a moduli space of objects in A}} \label{sec1}

Let $M=\Hilb^{gtc}(Y)$ be the irreducible component of $\Hilb^{3n+1}(Y)$ containing twisted cubics, and let $u\colon M \to Z$ be the contraction and $j\colon Y \to Z$ the embedding that appear in \cite{llsvs}.  Recall from \cite{llsvs} that $u$ factors as $\sigma \circ a$, where $a\colon M \to Z'$ is a $\IP^2$-bundle and $\sigma\colon Z' \to Z$ is the blow-up of $Z$ along $j(Y)$.  The analysis of the curves $C$ parametrized by $M$ breaks into two cases, depending on whether $C$ is arithmetically Cohen-Macaulay (aCM) or non-Cohen-Macaulay (non-CM).

If $u([C]) \notin j(Y)$ then $C$ is aCM.  The linear hull of $C$ is a $\IP^3$, and the ideal sheaf of $C$ in this $\IP^3$ has a resolution of the form
\begin{equation} \label{eq:acm_res}
0 \to \ko_{\IP^3}(-3)^2 \to \ko_{\IP^3}(-2)^3 \to I_{C/\IP^3} \to 0.
\end{equation}
Let $S_C = Y \cap \IP^3$, which is a cubic surface.  Any curve $C'$ corresponding to a point in the same fiber $a^{-1}(a([C]))$ is contained in the same cubic surface $S_C$.  Moreover there is a $3 \times 3$ matrix $A$ with entries in $H^0(S_C, \ko(1))$ such that for all such $C'$ the ideal sheaf $I_{C'/S_C}$ is generated by the minors of a $3 \times 2$ matrix $A_0$ consisting of two independent linear combinations of columns of $A$.  Finally, $I_{C'/S_C}$ admits a 2-periodic resolution
\[ \dotsb \xra{\;A\;}\ko_{S_C}(-5)^3\xra{\;B\;}\ko_{S_C}(-3)^{\oplus 3}\xra{\;A\;} \ko_{S_C}(-2)^3\lra I_{C'/S_C}\lra 0 \]
where $B$ is the adjugate matrix of $A$.  In particular, as abstract sheaves, all $I_{C'/S_C}$ for points $[C']$ in the same $a$-fiber are isomorphic.  The converse holds as well: for any $[C'] \in M$ with $I_{C'/S_{C'}} \cong I_{C/S_C}$ we have $[C'] \in a^{-1}(a([C]))$.  To see this, note that the curve $C$ can be reconstructed from its ideal sheaf by a choice of homomorphism $I_{C/S_C} \to \ko_{S_C}$.  From the resolution \eqref{eq:acm_res} and the exact sequence
\begin{equation} \label{eq:ICS_vs_ICP}
0 \to \ko_{\IP^3}(-3) \to I_{C/\IP^3} \to I_{C/S_C} \to 0
\end{equation}
we find that $\Hom(I_{C/S_C}, \ko_{S_C}) = H^2(I_{C/S_C}(-1))^*$ is 3-dimensional, which gives a $\IP^2$-family of distinct curves with isomorphic $I_{C/S_C}$.  But the fiber $a^{-1}(a([C]))$ is already a $\IP^2$-family of such curves, so there are no others.

If on the other hand $u([C]) = j(y)$ then $C$ is non-CM, and consists of a singular plane cubic curve $C_0 \subset S_C$ together with an embedded point at $y$.  In particular $C$ has only one embedded point, so if two curves $C_1$ and $C_2$ both have embedded points at $y$ then $u([C_1]) = u([C_2]) = j(y)$.

This concludes our recollections from \cite{llsvs}.
\vspace \baselineskip

Let $L_k\colon D(Y) \to \langle \ko_Y(k) \rangle^\perp \subset D(Y)$ be the left mutation past $\ko_Y(k)$:
\[ L_k(B) = \cone\Bigl(\ko_Y(k)\otimes \RHom(\ko_Y(k),B)\to B \Bigr). \]
Then the composition $\pr := L_{-1} \circ L_0 \circ L_1$ is the projection into $\ka$ discussed in the introduction.  It annihilates $\ko_Y(-1)$, $\ko_Y$, and $\ko_Y(1)$, and acts as the identity on $\ka$.  It is left adjoint to the inclusion $\ka \hookrightarrow D(Y)$.

\begin{lemma} \label{lem:projections} \ 
\begin{enumerate}
\item[(a)] For all $[C] \in M$ one has $\pr(I_{C/Y}(2)) \cong \pr(I_{C/S_C}(2))$.
\item[(b)] If $u([C]) = j(y)$ then $\pr(I_{C/Y}(2)) \cong \pr(\ko_y)[-1]$.
\end{enumerate}
\end{lemma}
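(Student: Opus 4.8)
The plan is to reduce both parts to the single fact that $\pr$ annihilates $\ko_Y(-1)$, $\ko_Y$, and $\ko_Y(1)$, hence annihilates any complex on $Y$ admitting a finite resolution by direct sums of shifts of these three line bundles. Since $\pr$ is exact (being left adjoint to the inclusion $\ka\hookrightarrow D(Y)$), it then suffices in each case to produce a short exact sequence exhibiting the relevant difference of ideal sheaves as such a complex, after twisting by $\ko_Y(2)$ at the appropriate moment.

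For (a): every $C$ parametrized by $M$ spans a $\IP^3$ and lies on $Y$, so $C\subset S_C=Y\cap\IP^3$, and from the tautological sequence $0\to I_{S_C/Y}\to I_{C/Y}\to I_{C/S_C}\to 0$, twisted by $2$ and pushed through the exact functor $\pr$, part (a) follows once we know $\pr(I_{S_C/Y}(2))=0$. But $\IP^3\subset\IP^5$ is the zero locus of two linear forms, which restrict to a regular sequence on $Y$ by a dimension count ($Y$ is Cohen--Macaulay and $\dim S_C=\dim Y-2$), so the Koszul complex gives $0\to\ko_Y(-2)\to\ko_Y(-1)^{\oplus 2}\to I_{S_C/Y}\to 0$; twisting by $2$ resolves $I_{S_C/Y}(2)$ by $\ko_Y$ and $\ko_Y(1)^{\oplus 2}$, both of which $\pr$ kills.

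For (b): here $C$ is a plane cubic $C_0$ together with a single embedded point at $y$, so $C_0\hookrightarrow C$ is a closed immersion and the kernel of $\ko_C\twoheadrightarrow\ko_{C_0}$ is a sheaf of length $\chi(\ko_C)-\chi(\ko_{C_0})=1-0=1$ supported at $y$, hence isomorphic to $\ko_y$; this yields on $S_C$ the sequence $0\to I_{C/S_C}\to I_{C_0/S_C}\to\ko_y\to 0$. The plane $P$ containing $C_0$ is a hyperplane inside the $\IP^3$ spanned by $C$, and $C_0=S_C\cap P$ (both are cubic plane curves in $P$, and $C_0\subseteq S_C\cap P$), so $I_{C_0/S_C}\cong\ko_{S_C}(-1)$ and $I_{C_0/S_C}(2)\cong\ko_{S_C}(1)$. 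The Koszul resolution of $S_C$ in $Y$ from part (a), twisted by $1$, resolves $\ko_{S_C}(1)$ by $\ko_Y(-1)$, $\ko_Y^{\oplus 2}$, and $\ko_Y(1)$, so $\pr(\ko_{S_C}(1))=0$. Twisting the displayed sequence by $2$ (using $\ko_y(2)\cong\ko_y$) and applying $\pr$ to the triangle $I_{C/S_C}(2)\to\ko_{S_C}(1)\to\ko_y$, the middle term vanishes and we obtain $\pr(I_{C/S_C}(2))\cong\pr(\ko_y)[-1]$; together with part (a) this gives $\pr(I_{C/Y}(2))\cong\pr(\ko_y)[-1]$.

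The computations are routine; the step most deserving of care is the interface with \cite{llsvs} in part (b) --- one needs that the embedded point of a non-CM curve has length exactly one (so the kernel sheaf above is genuinely $\ko_y$ and not something fatter) and that it fattens the linear span of $C_0$ from a $\IP^2$ to a $\IP^3$ (so that $C_0$ appears on the cubic surface $S_C$ as an honest hyperplane section). Both are contained in the structural description of $M$ recalled at the start of this section, so I would simply quote them rather than reprove anything.
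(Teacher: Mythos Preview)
Your proof is correct and follows the same strategy as the paper: in both parts you exhibit the relevant ``difference'' term as something with a Koszul-type resolution by copies of $\ko_Y(-1)$, $\ko_Y$, $\ko_Y(1)$, hence killed by $\pr$. Part (a) is identical to the paper's argument. For part (b) there is a minor variation: the paper works directly on $Y$, using the exact sequence $0\to I_{C/Y}(2)\to I_{C_0/Y}(2)\to\ko_y\to 0$ and resolving $I_{C_0/Y}(2)$ via the Koszul complex of $C_0$ as a codimension-3 linear section of $Y$ (yielding $\ko_Y(-1)\to\ko_Y^3\to\ko_Y(1)^3$), whereas you first pass to $S_C$ via part (a) and then resolve $\ko_{S_C}(1)$. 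The paper's route is one step shorter and avoids checking that $C_0$ is a hyperplane section of $S_C$, but your version is equally valid and the extra verifications you flag (length of the embedded point, linear span jumping to $\IP^3$) are indeed contained in the structural description recalled from \cite{llsvs}.
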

\begin{proof}
(a) From the Koszul resolution
\[ 0 \to \ko_Y \to \ko_Y(1)^2 \to I_{S_C/Y}(2) \to 0 \]
we see that that $\pr(I_{S_C/Y}(2)) = 0$, so from the exact sequence
\[ 0 \to I_{S_C/Y}(2) \to I_{C/Y}(2) \to I_{C/S_C}(2) \to 0 \]
we see that $\pr(I_{C/Y}(2)) \cong \pr(I_{C/S_C}(2))$.

(b) Let $C_0$ be the singular plane cubic recalled above.  From the Koszul resolution
\[ 0 \to \ko_Y(-1) \to {\ko_Y}^3 \to \ko_Y(1)^3 \to I_{C_0/Y}(2) \to 0 \]
we see that $\pr(I_{C_0/Y}(2)) = 0$, so from the exact sequence
\[ 0 \to I_{C/Y}(2) \to I_{C_0/Y}(2) \to \ko_y \to 0 \]
we see that $\pr(I_{C/Y}(2)) \cong \pr(\ko_y)[-1]$.
\end{proof}

\begin{proposition} \label{prop:fibers}
Two points $[C_1], [C_2] \in M$ lie in the same fiber of $u\colon M \to Z$ if and only if $\pr(I_{C_1}(2)) \cong \pr(I_{C_2}(2))$.
\end{proposition}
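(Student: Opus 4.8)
The statement has two implications, and the plan is to treat them separately: the forward one, that curves lying in a common fibre of $u$ have isomorphic projections, is short, while the reverse one carries the weight. For the forward implication, suppose $[C_1]$ and $[C_2]$ lie in a common fibre of $u$. Since a curve parametrised by $M$ is aCM precisely when its image under $u$ avoids $j(Y)$, either both curves are aCM or both are non-CM. If both are non-CM then $u([C_1])=u([C_2])=j(y)$ for a single point $y$, each $C_i$ carries its unique embedded point at $y$, and Lemma~\ref{lem:projections}(b) gives $\pr(I_{C_i}(2))\cong\pr(\ko_y)[-1]$ for $i=1,2$. If both are aCM then, $\sigma$ being an isomorphism over $Z\smallsetminus j(Y)$, the two points lie in a common fibre of $a$; by the matrix-factorisation description recalled above the curves then lie on a common cubic surface $S$ with $I_{C_1/S}\cong I_{C_2/S}$, so $\pr(I_{C_i}(2))\cong\pr(I_{C_i/S}(2))$ by Lemma~\ref{lem:projections}(a) and these agree.

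For the reverse implication I would use the projection triangle $\beta(E)\lra E\lra\pr(E)\lra\beta(E)[1]$, in which $\beta(E)$ --- the part of $E$ lying in the subcategory generated by $\ko_Y(-1),\ko_Y,\ko_Y(1)$ --- is an iterated extension of the three terms $\ko_Y(k)\otimes\RHom(\ko_Y(k),-)$ produced by the mutations composing $\pr=L_{-1}\circ L_0\circ L_1$. For a ``test'' object $T$, applying $\RHom_Y(T,-)$ to this triangle shows that the map
\[\RHom_Y(T,E)\lra\RHom_\ka(\pr T,\pr E)\]
coming from the unit $E\to\pr E$ and the adjunction between $\pr$ and the inclusion $\ka\hookrightarrow D(Y)$ --- which is simply $f\mapsto\pr(f)$ --- is an isomorphism in every degree $d$ with $\RHom^d_Y(T,\beta(E))=\RHom^{d+1}_Y(T,\beta(E))=0$. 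So the first concrete task is a bounded cohomology computation: from~\eqref{eq:acm_res} and~\eqref{eq:ICS_vs_ICP} one evaluates $\RHom_Y(\ko_Y(k),I_{C/S_C}(2))$ and $\RHom_Y(I_{C/S_C}(2),\ko_Y(k))$ for $k=-1,0,1$, and from Koszul resolutions of points one evaluates $\RHom_Y(\ko_Y(k),\ko_y)$. I expect to find that $\beta(I_{C/S_C}(2))$ is concentrated in cohomological degrees $-1$ and $0$, built from $\ko_Y^3$ and $\ko_Y(-1)^9$; that $\beta(\ko_y)$ is concentrated in degrees $-2,-1,0$, built from $\ko_Y(1)$, $\ko_Y^5$ and $\ko_Y(-1)^{10}$; and that $\RHom_Y(I_{C/S_C}(2),\ko_Y(k))$ lies in degree $2$, or vanishes.

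Granting these computations, and assuming $\pr(I_{C_1}(2))\cong\pr(I_{C_2}(2))$, I would finish in three steps. First, the aCM and non-CM alternatives cannot mix: with $T=\ko_y$ the group $\RHom^{-1}_Y(\ko_y,\pr(I_{C/S_C}(2)))$ is trapped between $\RHom^{-1}_Y(\ko_y,I_{C/S_C}(2))=0$ and $\RHom^{0}_Y(\ko_y,\beta(I_{C/S_C}(2)))=0$ and so vanishes, whereas $\RHom^{-1}_Y(\ko_y,\pr(\ko_y)[-1])=\RHom^{0}_Y(\ko_y,\pr\ko_y)=\mathbb C$; crucially no numerical ($K$-theoretic) invariant can see this, since by Section~\ref{sec3} the two classes of objects become isomorphic in a limit. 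Second, if both $[C_i]$ are non-CM, say $u([C_i])=j(y_i)$ so that $\pr(I_{C_i}(2))\cong\pr(\ko_{y_i})[-1]$, then for every point $z$ the mechanism gives $\Hom_\ka(\pr\ko_z,\pr\ko_{y_i})\cong\Hom_Y(\ko_z,\ko_{y_i})$, which is non-zero exactly for $z=y_i$; comparing $i=1,2$ forces $y_1=y_2$, so $[C_1],[C_2]$ share a fibre of $u$. Third, if both are aCM, write $E_i=I_{C_i/S_{C_i}}(2)$; since $\RHom_Y(E_i,\beta(E_j))$ sits in degree $2$, the map $\Hom_Y(E_i,E_j)\to\Hom_\ka(\pr E_i,\pr E_j)$, $f\mapsto\pr(f)$, is an isomorphism. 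Lifting an isomorphism $\pr E_1\xra{\ \sim\ }\pr E_2$ and its inverse to maps $f\colon E_1\to E_2$ and $g\colon E_2\to E_1$ gives $\pr(gf)=\mathrm{id}$, and since $\pr$ is moreover injective on $\Hom_Y(E_1,E_1)$ this forces $gf=\mathrm{id}_{E_1}$, and symmetrically $fg=\mathrm{id}_{E_2}$; hence $E_1\cong E_2$, i.e.\ $I_{C_1/S_{C_1}}\cong I_{C_2/S_{C_2}}$, and the converse statement recalled from~\cite{llsvs} then puts $[C_1],[C_2]$ in a common fibre of $a$, hence of $u$.

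The main obstacle is the reverse implication, and within it the hands-on verification that $\pr$ keeps enough information about the sheaves $I_{C/S_C}(2)$ and $\ko_y$: concretely, the cohomology computations feeding the displayed isomorphism, together with the delicate point --- invisible to $K$-theory --- that the aCM/non-CM dichotomy must be read off from an $\Ext$-degree.
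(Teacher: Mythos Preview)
Your proposal is correct, and for the forward direction and the non-CM cases it is essentially the paper's argument repackaged: where the paper applies a Grothendieck spectral sequence to the truncated Koszul complex \eqref{eq:trunc_koszul} to compute $\Hom(\ko_{y_1},\pr(\ko_{y_2}))$ and $\Hom(I_{C_1/S}(2),\pr(\ko_{y_2})[-1])$, you obtain the same control from the triangle $\beta(E)\to E\to\pr(E)$ together with $\Ext^{<4}(\ko_y,\ko_Y(k))=0$.  The mixed case is handled with a different test object --- the paper shows $\Hom(\pr(I_{C_1/S}(2)),\pr(\ko_{y_2})[-1])=0$ directly, while you detect the difference via $\Ext^{-1}(\ko_y,-)$ --- but the mechanism (adjunction plus vanishing) is the same.

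The aCM case is where the two arguments genuinely diverge.  The paper reconstructs $I_{C/S_C}(2)$ from its projection in two concrete steps: since $I_{C/S_C}(2)\in\langle\ko_Y(1),\ko_Y(2)\rangle^\perp$, one has $\pr(I_{C/S_C}(2))=L_{-1}(F_C[1])$ with $F_C=L_0(I_{C/S_C}(2))[-1]$ a sheaf; then $L_{-1}$ is undone by the right mutation $R_{-1}$ (these give inverse equivalences between $\langle\ko_Y,\ko_Y(1),\ko_Y(2)\rangle^\perp$ and $\ka$), and finally $I_{C/S_C}(2)$ is recovered as the cotorsion $F_C\ddual/F_C$.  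Your route is purely categorical: the vanishing $\Ext^{\le 1}_Y(E_i,\ko_Y)=\Ext^*_Y(E_i,\ko_Y(-1))=0$ forces $\Ext^{0,1}_Y(E_i,\beta(E_j))=0$, so $\pr$ is bijective on $\Hom_Y(E_i,E_j)$ and an isomorphism of projections lifts.  Your argument is more uniform and avoids the sheaf-theoretic trick with the double dual; the paper's argument is more explicit and tells you \emph{how} to recover the sheaf, not just that you can.  (A small over-claim: $\RHom_Y(I_{C/S_C}(2),\ko_Y(1))$ need not be concentrated in degree~$2$, but you never use this since $\beta(E_j)$ has no $\ko_Y(1)$ component.)
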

\begin{proof}
If $u([C_1]) = u([C_2]) \notin j(Y)$ then $a([C_1]) = a([C_2])$, so $I_{C_1/S_{C_1}} \cong I_{C_2/S_{C_2}}$, so $\pr(I_{C_1}(2)) \cong \pr(I_{C_2}(2))$ by Lemma \ref{lem:projections}(a).  If $u([C_1]) = u([C_2]) = j(y)$ then $\pr(I_{C_1}(2)) \cong \pr(I_{C_2}(2))$ by Lemma \ref{lem:projections}(b).

Conversely, suppose that $\pr(I_{C_1}(2)) \cong \pr(I_{C_2}(2))$.  We consider three cases.

Case 1: $C_1$ and $C_2$ are both aCM.  It is enough to show that $\pr(I_{C/S_C}(2))$ determines $I_{C/S_C}(2)$ for every aCM curve $C$.  From \eqref{eq:acm_res} and \eqref{eq:ICS_vs_ICP} we find that $H^*(I_{C/S_C}) = H^*(I_{C/S_C}(1)) = 0$, so $I_{C/S_C}(2) \in \langle \ko_Y(1), \ko_Y(2) \rangle^\perp$.  Moreover we find that $I_{C/S_C}(2)$ is generated by global sections, so $F_C := L_0(I_{C/S_C}(2))[-1]$ is a sheaf and fits into an exact sequence
\[ 0\to F_C\to \ko_Y^3\to I_{C/S_C}(2) \to 0. \]
As $I_{C/S_C}(2)$ has codimension 2, dualizing this sequence gives $F_C\dual\cong(\ko_Y^3)\dual$, and dualizing again shows that the inclusion of $F_C$ in $\ko_X^3$ is isomorphic to the natural map from $F_C$ to its double dual. Hence $I_{C/S_C}(2)$ can be recovered from $F_C$ as its cotorsion: $I_{C/S_C}(2)\cong F_C\ddual/F_C.$
Now $F_C$ is contained in $\langle \ko_Y,\ko_Y(1),\ko_Y(2)\rangle^\perp$. Since the canonical bundle $\omega_Y$ is $\ko_Y(-3)$, the left mutation $L_{-1}$ and the corresponding right mutation $R_{-1}$ provide inverse equivalences
\[ \xymatrix{
\langle \ko_Y,\ko_Y(1),\ko_Y(2)\rangle^\perp \ar@<.5ex>[r]^{\ L_{-1}\ } &
\langle\ko_Y(-1),\ko_Y,\ko_Y(1)\rangle^\perp  \ar@<.5ex>[l]^{\ R_{-1}\ }
}. \] 
Hence $\pr(I_{C/S}(2))=L_{-1}(F_C)$ determines $F_C$ and hence $I_{C/S}(2)$.

Case 2: $C_1$ is aCM and $C_2$ is non-CM with embedded point $y_2$.  Since $\pr$ is left adjoint to the inclusion $\ka \hookrightarrow D(Y)$, we have
\begin{align*}
\Hom\bigl( \pr(I_{C_1}(2)),\ \pr(I_{C_2}(2)) \bigr)
&= \Hom\bigl( \pr(I_{C_1/S_{C_1}}(2)),\ \pr(\ko_{y_2})[-1] \bigr) \\
&= \Hom\bigl( I_{C_1/S_{C_1}}(2),\ \pr(\ko_{y_2})[-1] \bigr).
\end{align*}
Applying $\pr$ to  $\ko_{y_2}[-1]$ we get a truncated Koszul complex
\begin{equation} \label{eq:trunc_koszul}
\ko_Y(-1)^{10} \to {\ko_Y}^5 \to \underline{\ko_Y(1)} \to \ko_{y_2},
\end{equation}
where the underlined term is in degree zero.  Applying $\Hom(I_{C_1/S_{C_1}}(2),\ -)$ to the complex \eqref{eq:trunc_koszul} we find that the $E_1$ page of the Grothendieck spectral sequence is
\[ \begin{array}{ccccccc|c}
0 & \to & * & \to & * & \to & * \\
0 & \to & * & \to & * & \to & * \\
0 & \to & * & \to & * & \to & * & q=2 \\
0 & \to & 0 & \to & 0 & \to & * & q=1 \\
0 & \to & 0 & \to & 0 & \to & * & q=0 \\
\hline
p=-2 & & p=-1 & & p=0 & & p=1
\end{array} \]
where in the left-hand column we have used the fact that
\[ \Ext^q\bigl( I_{C_1/S_{C_1}}(2),\ \ko_Y(-1) \bigr)
= H^{4-q}\bigl( I_{C_1/S_{C_1}} \bigr)\dual = 0 \]
and the other zeroes come for dimension reasons.  From this it follows that \linebreak $\Hom( I_{C_1/S_{C_1}}(2),\ \pr(\ko_{y_2})[-1] ) = 0$, so $\pr(I_{C_1}(2)) \not\cong \pr(I_{C_2}(2))$.

Case 3: $C_1$ and $C_2$ are non-CM with embedded points $y_1$ and $y_2$.  We have
\begin{align*}
\Hom\bigl( \pr(I_{C_1}(2)),\ \pr(I_{C_1}(2)) \bigr)
&= \Hom\bigl( \pr(\ko_{y_1}),\ \pr(\ko_{y_2}) \bigr) \\
&= \Hom\bigl( \ko_{y_1},\ \pr(\ko_{y_2}) \bigr).
\end{align*}
By a similar Grothendieck spectral sequence calculation, this is $\Hom(\ko_{y_1}, \ko_{y_2})$.  Thus if $\pr(I_{C_1}(2)) \cong \pr(I_{C_1}(2))$ then this $\Hom$ does not vanish, so $y_1 = y_2$, so $u([C_1]) = u([C_2])$.
\end{proof}

Not only are the points of $Z$ in bijection with the objects $\pr(I_C(2))$, but in fact the tangent spaces of $Z$ can be identified with the deformation spaces of the corresponding objects, so $Z$ truly deserves to be called a moduli space of objects in $\ka$.  But we will not prove this, as we do not need it for our main theorem.

Of course one would like to be able to define $Z$ directly as the component of the moduli space of stable objects in $\ka$ containing $\pr(\ko_y)[-1]$, thus avoiding the hard work of \cite{llsvs}.  At present, though, no one knows how to produce any kind of stability condition on $\ka$ when $Y$ is general.  So while the derived perspective clarifies the construction of \cite{llsvs}, it cannot yet replace it.

\section{Pfaffian cubics} \label{sec2}

Let $V$ be a 6-dimensional complex vector space and $L\subset\Lambda^2V^*$ a generic 6-dimensional subspace of skew-symmetric forms on $V$. To these data Beauville and Donagi associate a K3 surface 
\[ X=\Bigl\{[P]\in\Grass(2,V) \Bigm| \varphi|_P=0\text{ for all }\varphi \in L\Bigr\} \]
and a Pfaffian cubic fourfold
\[ Y=\Bigl\{[\varphi]\in \IP(L^*) \Bigm| \rk(\varphi)=4\Bigr\}
=\Bigl\{[\varphi]\in \IP(L^*) \Bigm| \pf(\varphi) = 0\Bigr\}. \]
(Here we use Grothendieck's convention that $\IP(L^*)$ is the space of 1-dimensional quotients of $L^*$, hence of 1-dimensional subspaces of $L$.)  For a generic choice of $L$, both $X$ and $Y$ are smooth, and $X$ does not contain a line nor $Y$ a plane.  Under this genericity assumption, $Y$ cannot contain a quadric surface either, for the linear hull of any quadric surface $Q\subset Y$ would cut out a residual plane.

In this section we study the correspondence
\[ \Gamma = \Bigl\{ ([P],[\varphi]) \in X\times Y \Bigm| P\cap \rad(\varphi) \ne 0 \Bigr\} \]
and show that its ideal sheaf induces an equivalence between $D(X)$ and $\ka$.  This has been proved by Kuznetsov in \cite{kuz_gr} -- see in particular the proof of his Lemma 8.2, where he shows that his kernel is $I_\Gamma$ twisted by a line bundle -- but as his machinery is very heavy we prefer to give a self-contained account.  The fiber of $\Gamma$ over a point of $X$ is a degree-4 ruled surface on $Y$, as we shall see below; this family of surfaces is mentioned by Beauville and Donagi \cite[Rem.~(1)]{bd} and goes back to Fano \cite{fano}.

The correspondence $\Gamma$ carries a natural scheme structure defined as follows: Let $0\to \kp\to V_X\to \kq\to 0$ denote the tautological bundle sequence on $X$, and let $A\colon V_{\IP(L^*)}\to V_{\IP(L^*)}\dual \otimes \ko(1)$ denote the tautological skew-symmetric form parametrized by $\IP(L^*)$. By construction, the restriction of $A_\varphi$ to any $P$, $[P]\in X$, vanishes, so $A$ induces a homomorphism $A'\colon\kp \boxtimes \ko \to \kq\dual \boxtimes \ko(1)$ on $X\times \IP(L^*)$. Then $\Gamma\subset X\times \IP(L^*)$ is the subscheme defined by the vanishing of the $2\times 2$-minors of $A'$. There are natural morphisms $X\xla{\;p_X\;} \Gamma\xra{\;p_Y\;}Y$.  

For any $[\varphi]\in Y$, the radical $\rad(\varphi)$ is a plane in $V$ which, however, can never lie in $X$:  In fact, up to a scalar factor, the differential $D_\varphi\pf$ maps a tangent vector $\psi$ to its value on $\Lambda^2\rad(\varphi)$. As $Y$ is smooth, the intersection of $\IP(L^*)$ and the Pfaffian hypersurface at $[\varphi]$ is transversal. Hence not all $\varphi' \in L$ can vanish on $\rad(\varphi)$.

Thus the fiber $\Gamma_P:=p_X^{-1}([P])$ admits a well-defined map $\pi\colon \Gamma_P\to \IP(P)$, $\varphi\mapsto P\cap \rad(\varphi)$.  The fiber $\pi^{-1}([\ell])$ over a line $\ell\subset P$ is a linear subspace in $Y$.  But by assumption, $Y$ does not contain a plane.  Hence this fiber is at most 1-dimensional, and in turn $\dim(\Gamma_P)\leq 2$ and $\dim(\Gamma) \le 4$. As $\Gamma$ is a determinantal variety, there is the \emph{a priori} bound $\codim(\Gamma/X \times \IP(L^*))\leq 3$.  We conclude that $\Gamma$ has the expected dimension 4, and that the Eagon-Northcott complex associated to $A'{}^t$ is a locally free resolution of the ideal sheaf $I_{\Gamma/X \times \IP(L^*)}$, and $\Gamma$ is Cohen--Macaulay (cf.\ \cite[Thm.~A2.10 and Cor.~A2.13]{eis}).  Restricting the complex to $[P] \times \IP(L^*)$, we obtain a locally free resolution
\begin{equation}\label{eq:Eagon-Northcott}
0\to \ko_{\IP(L^*)}(-4)^3\to \ko_{\IP(L^*)}(-3)^8\to \ko_{\IP(L^*)}(-2)^6
\to \ko_{\IP(L^*)}\to \ko_{\Gamma_P}\to 0.
\end{equation}
In particular, the Hilbert polynomial of $\Gamma_P$ is constant as a function of $P$, and $p_X\colon \Gamma\to X$ is flat. Moreover, each $\Gamma_P$ is a 2-dimensional Cohen-Macaulay subscheme of $Y$ of degree $4$.  Since $Y$ does not contain planes or quadric surfaces, $\Gamma_P$ is generically reduced and hence reduced.
\vspace\baselineskip

Let $\Phi\colon D(Y)\to D(X)$ be the Fourier-Mukai functor induced by the ideal sheaf $I_\Gamma = I_{\Gamma/X\times Y}$, and let $\Psi\colon D(X)\to D(Y)$ be its right adjoint. From the resolution \eqref{eq:Eagon-Northcott} and the exact sequence
\[ 0\to \ko_{\IP(L^*)}(-3)\to I_{\Gamma_P/\IP(L^*)}\to I_{\Gamma_P/Y}\to 0 \]
we find that $\Phi(\ko_Y(k))=0$ for $k=-1,0,1$.  This implies that $\Psi(D(X))\subset\ka$.

\begin{proposition}[Kuznetsov] \label{prop:equiv}
$\Psi\colon D(X)\to \ka$ is an equivalence.
\end{proposition}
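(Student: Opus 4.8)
The plan is to show that $\Psi$ is fully faithful and that its essential image exhausts $\ka$; given fully faithfulness, essential surjectivity is almost formal.  For the first part I would invoke Bridgeland's fully-faithfulness criterion: since $\Psi$ is a Fourier--Mukai functor between the smooth projective varieties $X$ and $Y$ it has adjoints on both sides --- the left adjoint being $\Phi$ --- so it is enough to check $\Psi$ on skyscraper sheaves, i.e.\ to show that $\Hom_{D(Y)}(\Psi\ko_{[P_1]},\Psi\ko_{[P_2]}[i])$ vanishes for all $i$ when $[P_1]\ne[P_2]$, vanishes for $i<0$ and $i>2$ when $[P_1]=[P_2]$, and is one-dimensional when $[P_1]=[P_2]$ and $i=0$.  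For this I would first identify $\Psi\ko_{[P]}$.  Because $p_X\colon\Gamma\to X$ is flat with Cohen--Macaulay fibres (established above) and $\omega_X$ is trivial, base change shows that $\Psi\ko_{[P]}$ is, up to a twist and a shift, the derived dual $R\kh om_{\ko_Y}(I_{\Gamma_P/Y},\ko_Y)$ of the ideal sheaf of the reduced degree-$4$ surface $\Gamma_P\subset Y$.  Dualizing the Eagon--Northcott resolution \eqref{eq:Eagon-Northcott}, together with the sequence $0\to\ko_{\IP(L^*)}(-3)\to I_{\Gamma_P/\IP(L^*)}\to I_{\Gamma_P/Y}\to 0$, presents this complex in terms of line bundles $\ko_Y(k)$; feeding this into $\Hom_{D(Y)}$ and using Serre duality on $Y$, the required $\Ext$-groups reduce to the cohomology of line bundles on $Y$ and of (twists of) the structure sheaves $\ko_{\Gamma_{P_1}},\ko_{\Gamma_{P_2}}$, all computable from \eqref{eq:Eagon-Northcott}.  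When $[P_1]\ne[P_2]$ one additionally needs that $\Gamma_{P_1}\ne\Gamma_{P_2}$ and that the two surfaces meet in $Y$ in the expected dimension, which one reads off from the reducedness proved above and from the map $\pi\colon\Gamma_P\to\IP(P)$.

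Granting fully faithfulness, $\Psi(D(X))$ is an admissible subcategory of $\ka$ (a fully faithful functor from $D(X)$ with adjoints has admissible image), so there is a semiorthogonal decomposition $\ka=\langle\kc,\Psi(D(X))\rangle$ with $\kc$ the orthogonal complement.  Since $\ka$ has Serre functor $[2]$, which trivially preserves $\Psi(D(X))$, Serre duality inside $\ka$ upgrades this to an orthogonal decomposition $\ka=\kc\oplus\Psi(D(X))$.  But $\ka$ admits no nontrivial orthogonal decomposition --- for instance its degree-zero Hochschild cohomology $HH^0(\ka)$ is one-dimensional, as for a K3 surface, hence has no nontrivial idempotents --- and $\Psi(D(X))\ne0$, so $\kc=0$ and $\Psi$ is essentially surjective onto $\ka$.

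I expect the main obstacle to be the skyscraper computation, and within it the orthogonality $\Hom^\bullet_{D(Y)}(\Psi\ko_{[P_1]},\Psi\ko_{[P_2]})=0$ for $[P_1]\ne[P_2]$: this is exactly where the detailed geometry of the correspondence $\Gamma$ --- how the two degree-$4$ surfaces $\Gamma_{P_1}$ and $\Gamma_{P_2}$ lie inside the cubic fourfold $Y$ --- has to be used, rather than the formal properties of Fourier--Mukai functors.  One could instead try to compute the convolution of the kernel of $\Phi$ with that of $\Psi$ over $Y$ and identify it with $\ko_{\Delta_X}$, which would give $\Phi\circ\Psi\cong\mathrm{id}_{D(X)}$ and hence fully faithfulness of $\Psi$ in one stroke; but this requires essentially the same input about how copies of $\Gamma$ meet over $Y$.
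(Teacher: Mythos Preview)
Your overall architecture matches the paper's: verify the Bondal--Orlov criterion on skyscrapers to get fully faithfulness, then use that the Serre functor on $\ka$ is $[2]$ together with $HH^0(\ka)\cong\mathbb C$ to rule out a nontrivial orthogonal complement.  The essential surjectivity step is essentially identical to the paper's.

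Where you diverge is in the Ext computation, and this is exactly the step you flag as the main obstacle.  You propose to feed the Eagon--Northcott resolution into $\Hom$ and use Serre duality on $Y$; for $[P_1]\ne[P_2]$ this forces you to understand how the two degree-$4$ surfaces $\Gamma_{P_1}$ and $\Gamma_{P_2}$ intersect inside $Y$, and your claim that they ``meet in the expected dimension'' is not justified by the reducedness of $\Gamma_P$ or by the ruling $\pi\colon\Gamma_P\to\IP(P)$ alone.  The paper bypasses this entirely.  Since $\Psi(\ko_{[P]})$ lies in $\ka$, one should use Serre duality \emph{in $\ka$}, not on $Y$: the Serre functor there is $[2]$, so $\Ext^i\cong\Ext^{2-i}$ and the cases $i\ge 2$ reduce to $i\le 0$, which are immediate once you know $\Gamma_{P_1}\ne\Gamma_{P_2}$ (the only geometric input, and a one-line argument from $P_1\cap P_2=0$).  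The remaining case $i=1$ then falls out from Hirzebruch--Riemann--Roch, since $\chi(I_{\Gamma_{P_2}},I_{\Gamma_{P_1}})=0$.  So the paper needs \emph{no} information about $\Gamma_{P_1}\cap\Gamma_{P_2}$ beyond the two surfaces being distinct, whereas your direct computation would require intersection-theoretic control that is neither established in the paper nor obviously true.
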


\begin{proof} Step 1. For distinct points $[P]$, $[Q]\in X$, the corresponding subvarieties $\Gamma_P$,
$\Gamma_Q\subset Y$ are distinct:  We have $P \cap Q = 0$; otherwise $X$ would contain a line.  Hence if $\Gamma_P=\Gamma_Q$, the mappings $[\varphi]\mapsto \rad(\varphi)\cap P$ and $[\varphi]\mapsto \rad(\varphi)\cap Q$ would define two different rulings on $\Gamma_P=\Gamma_Q$, which is impossible. 

Step 2. The functor $\Psi$ is fully faithful: By the criterion of Bondal and Orlov \cite[
Prop.~7.1]{huybrechts_fm}, it is enough to show that
\begin{equation} \label{eq:bo_criterion}
\dim\Ext^i_Y(\Psi(\ko_{[P]}),\Psi(\ko_{[Q]}))=\dim\Ext^i_X(\ko_{[P]},\ko_{[Q]}).
\end{equation}
The kernel inducing $\Psi$ is $I\dual\otimes \ko_Y(-3)[4]$ (cf.\ \cite[Prop.~5.9]{huybrechts_fm}).  Since $\Gamma$ is flat over $X$ we have $\Psi(\ko_{[P]})=I\dual_{\Gamma_P/Y}(-3)[4]$, so we can rewrite \eqref{eq:bo_criterion} as
\[ \dim\Ext^i_Y(I_{\Gamma_Q/Y},I_{\Gamma_P/Y})=\dim\Ext^i_X(\ko_{[P]},\ko_{[Q]}). \]
This is trivial for $i<0$ and obviously
true for $i=0$, since $\Gamma_P$ and $\Gamma_Q$ have codimension 2 in $Y$ and are distinct if $P\neq Q$. Kuznetsov \cite[Cor.~4.4]{kuz_serre} has shown that the Serre functor of $\ka$ is given by shifting by 2.  Thus Serre duality gives the claim for $i\geq 2$.  Finally, Hirzebruch-Riemann-Roch gives $\chi(I_{\Gamma_Q/Y},I_{\Gamma_P/Y})=0$, so the claim also holds in the remaining case $i=1$.

Step 3. Since $\Psi$ is fully faithful and the Serre functor is given by shifting by 2 on both $D(X)$ and $\ka$, it is enough to show that $\ka$ is indecomposable \cite[Cor.~1.56]{huybrechts_fm}.  This follows from the fact that $HH^0(\ka) \cong HH_{-2}(\ka)$ is 1-dimensional \cite{kuz_hochschild}.
\end{proof}

\begin{lemma} \label{lem:4-1}
The projection $p_Y\colon \Gamma \to Y$ is generically finite of degree 4.
\end{lemma}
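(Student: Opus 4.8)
The plan is to compute a general fibre of $p_Y$ directly.  Fix a general $[\varphi]\in Y$; then $\varphi$ has rank $4$, and $R:=\rad(\varphi)$ is a $2$-plane in $V$ with $[R]\notin X$, as recalled above.  The fibre $p_Y^{-1}([\varphi])$ consists of the $[P]\in X$ with $P\cap R\neq 0$, and since $[R]\notin X$ every such $P$ meets $R$ in a single line; so $[P]\mapsto P\cap R$ gives a map $p_Y^{-1}([\varphi])\to\IP(R)\cong\IP^1$, and it is enough to count, for each line $\ell\subset R$, the $[P]\in X$ containing $\ell$.

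Fix $\ell=\langle v\rangle\subset R$ and write $P=\langle v,w\rangle$.  For $\psi\in L$ the condition $\psi|_P=0$ says $\psi(v,w)=0$, so, writing $\mu_v\colon L\to V^*$ for $\psi\mapsto\iota_v\psi$, we have $[P]\in X$ if and only if $w$ lies in the annihilator $(\mu_v(L))^{\perp}\subset V$ of $\mu_v(L)$.  As $v\in R=\rad(\varphi)$ we get $\mu_v(\varphi)=0$, and $\mu_v(L)\subset v^\perp$, so $\mu_v$ descends to a map $\bar\mu_v\colon L/\langle\varphi\rangle\to v^\perp$ between $5$-dimensional spaces; the set $\{[P]\in X:\ell\subset P\}$ is non-empty exactly when $\bar\mu_v$ is not an isomorphism, and is a single point when $\bar\mu_v$ has corank $1$.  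Globalising over $\IP(R)\cong\IP^1$: the spaces $v^\perp$ are the fibres of the rank-$5$ bundle $\mathcal N=\ker(V^*\otimes\ko_{\IP(R)}\to\ko_{\IP(R)}(1))$, which has $\det\mathcal N=\ko_{\IP^1}(-1)$, and the $\bar\mu_v$ assemble into a map $\bar\mu\colon(L/\langle\varphi\rangle)\otimes\ko_{\IP(R)}\to\mathcal N\otimes\ko_{\IP(R)}(1)$ of rank-$5$ bundles on $\IP^1$.  Thus $\det\bar\mu$ is a section of $\det(\mathcal N\otimes\ko_{\IP^1}(1))=\ko_{\IP^1}(4)$, and if it is not identically zero its four zeros --- simple, of corank $1$, for general $\varphi$ --- are precisely the lines $\ell$ that extend to a point of $X$, each extending to a unique one.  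Hence $p_Y^{-1}([\varphi])$ has four elements, and $p_Y$ is generically finite of degree $4$.

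The step needing care is the proviso that $\det\bar\mu\not\equiv 0$ for general $\varphi$, equivalently that $p_Y$ be dominant; this is not automatic from $\dim\Gamma=\dim Y=4$.  I expect to get it from the genericity of $L$: for general $v\in V$ there is exactly one $[\psi]\in\IP(L)$ with $v\in\rad(\psi)$ --- necessarily a point of $Y$ --- and then $\bar\mu_v$ is an isomorphism; as such $[\psi]$ are dense in $Y$, it follows that $\det\bar\mu\not\equiv 0$ for general $[\varphi]\in Y$.  Alternatively --- and this pins the number down with no genericity discussion --- one can compute the class of $\Gamma$ in $X\times\IP(L^*)$, the expected-codimension locus where $A'\colon\kp\boxtimes\ko\to\kq\dual\boxtimes\ko_{\IP(L^*)}(1)$ has rank $\le 1$: Thom--Porteous gives $[\Gamma]=c_3(\kq\dual\boxtimes\ko_{\IP(L^*)}(1)-\kp)$, and using $h_X^2=14$ and $c_2(\kq|_X)=9$ on the degree-$14$ surface $X$ (Schubert calculus on $\Grass(2,6)$) one computes $\int_{X\times\IP(L^*)}[\Gamma]\cdot c_1(\ko_{\IP(L^*)}(1))^4=12$; dividing by $\deg Y=3$ gives $\deg p_Y=4$, and the non-vanishing of this number already forces $p_Y$ dominant.
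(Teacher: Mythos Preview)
Your argument is correct, but it reaches the answer by a genuinely different route from the paper.  The paper's proof stays inside $\Grass(2,V)$: it recognises the fibre $\Gamma_\varphi$ as the intersection of the special Schubert cycle $\Sigma_\varphi=\{[Q]:Q\cap\rad(\varphi)\neq 0\}$ (class $\sigma_3$, dimension $5$) with the six hyperplanes $\varphi_1^\perp,\dots,\varphi_6^\perp$ defining $X$; since $\Sigma_\varphi$ already lies in $\varphi^\perp$, only five of these cut, so $\Gamma_\varphi$ is non-empty for \emph{every} $[\varphi]\in Y$.  Generic finiteness then follows immediately from $\dim\Gamma=\dim Y$, and the degree is $\deg\sigma_3=4$ by one line of Schubert calculus.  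In particular the paper never needs a separate dominance argument.

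Your first approach instead projects $\Gamma_\varphi$ to $\IP(\rad\varphi)\cong\IP^1$ and packages the fibre count as the vanishing of $\det\bar\mu\in H^0(\IP^1,\ko(4))$, which is a pleasant and concrete reformulation; but, as you note, you then have to argue separately that $\det\bar\mu\not\equiv 0$.  Your sketch for this (generic $v\in V$ lies in $\rad\psi$ for a unique $[\psi]\in\IP(L)$) does work once you observe that every $v$ lies in some radical, so if $\det\bar\mu$ vanished identically for all $[\varphi]\in Y$ it would vanish for every $v$, contradicting the generic statement.  Your Thom--Porteous alternative is also correct --- the numbers $H^2=14$ and $\int_X c_2(\kq)=9$ check out, and the resulting $12=3\cdot 4$ both forces dominance and pins down the degree --- though it is heavier machinery than the paper needs.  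In short: both arguments are valid; the paper's Schubert-cycle viewpoint is shorter and yields surjectivity of $p_Y$ for free, while yours trades that for a more explicit fibrewise picture and an independent intersection-theoretic confirmation.
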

\begin{proof}
Fix $[\varphi] \in Y$.  Then the fiber $\Gamma_\varphi := p_Y^{-1}([\varphi])$ is a linear section of the Schubert cycle
\[ \Sigma_\varphi := \Bigl\{ [Q] \in \Grass(2,V) \Bigm| Q \cap \rad(\varphi) \ne 0 \Bigr\}, \]
which is 5-dimensional.  Choose a basis $\varphi_1, \dotsc, \varphi_6$ of $L$ with $\varphi_1 = \varphi$.  This determines 6 hyperplane sections $\varphi_1^\perp, \dotsc, \varphi_6^\perp$ of $\Grass(2,V)$, and $\Gamma_\varphi = \Sigma_\varphi \cap \varphi_1^\perp \cap \dotsb \cap \varphi_6^\perp$.  But $\Sigma_\varphi$ is already contained in $\varphi_1^\perp = \varphi^\perp$, so $\Gamma_\varphi$ is the intersection of $\Sigma_\varphi$ with 5 hyperplanes, hence is non-empty.   Since $\dim \Gamma = \dim Y$, we see that $p_Y$ is generically finite.  With a bit of Schubert calculus we find that $\deg \Sigma_\varphi = 4$, so when $\Gamma_\varphi$ has the expected dimension it is a 0-dimensional scheme of length 4.
\end{proof}
In fact one can show that if $X$ contains no $(-2)$-curves then $p_Y\colon \Gamma \to Y$ is flat, but we do not need this.

\section{The birational isomorphism} \label{sec3}

Now we assemble the results from the previous two sections to prove the theorem stated in the introduction.  As in the previous section, let $Y$ be a Pfaffian cubic and $X$ the associated K3 surface, and assume that $X$ does not contain a line nor $Y$ a plane.  All our pullbacks, pushforwards, etc.\ are implicitly derived.

Let $\kc \subset M \times Y$ be the universal curve, and let $T$ be the convolution
\[ T := I_\Gamma \circ I_\kc(2) = \pi_{M \times X*} \left( \pi_{X \times Y}^* I_\Gamma \otimes \pi_{M \times Y}^* I_\kc(2) \right) \in D(M \times X). \]
For each $[C] \in M$, let $i_{[C]}\colon X \to M \times X$ be the inclusion $x \mapsto ([C],x)$.  Because $\kc$ is flat over $M$, the derived restriction $i_{[C]}^* T$ is isomorphic to $\Phi(I_C(2))$.

By Lemma \ref{lem:4-1}, there is an open subset $Y_0 \subset Y$ such that $\Phi(\ko_y)$ is the ideal sheaf $I_{\xi(y)/X}$ of a 0-dimensional subscheme $\xi(y) \subset X$ of length 4 for all $y \in Y_0$.  Since $\Phi$ annihilates $\ko_Y(-1)$, $\ko_Y$, and $\ko_Y(1)$, we have $\Phi \circ \pr = \Phi$, so by Lemma \ref{lem:projections}(b), the sheaves $\kh^k(i_{[C]}^* T)$ vanish for $k\neq 1$ whenever $u([C]) \in j(Y_0)$. By semicontinuity, the same then holds for all $[C]$ in an open neighborhood $M_0$ of $u^{-1}(j(Y_0))$.  Hence by \cite[Lem.~3.31]{huybrechts_fm}, the sheaf $E:=\kh^1(T|_{M_0\times X})$ is flat over $M_0$, and $T|_{M_0\times X} \cong E[-1]$.  Over $u^{-1}(j(Y_0)) \subset M_0$ the family $E$ parametrizes ideal sheaves on $X$, and since ideal sheaves are stable, we conclude after shrinking $M_0$ if necessary that $E$ is an $M_0$-flat family of ideal sheaves on $X$.

Let $t'\colon M_0\to \Hilb^4(X)$ be the classifying morphism induced by the family $E$.  Proposition \ref{prop:fibers} implies that $t'$ is constant on the fibers of $u$.  As $u$ is proper, there is an open neighborhood $Z_0$ of $j(Y_0)$ in $Z$ such that $u^{-1}(Z_0) \subset M_0$.  The restriction $T'_{u^{-1}(Z_0)}$ now descends to give a morphism $t\colon Z_0\to \Hilb^4(X)$.

It follows from Proposition \ref{prop:equiv} that $\Psi \circ \Phi = \pr$, so by Proposition \ref{prop:fibers} again we see that $t$ is injective.  The differential of $t$ must have full rank at some point --- otherwise $t(Z_0)$ would be a proper subscheme of $\Hilb^4(X)$, contradicting injectivity --- and hence it must have full rank on an open subset of $Z_1$ of $Z_0$.  Now $t|_{Z_1}$ is injective and \'etale, hence is an open immersion.  Thus $Z$ is birational to $\Hilb^4(X)$.

\bibliographystyle{plain}
\bibliography{main}

\end{document}